\documentclass[a4paper, 11pt, english, oneside, reqno]{article}

\usepackage[top=1.25in, bottom=1.25in, left=1.25in, right=1.25in]{geometry}
\parskip = 2mm
\usepackage[utf8]{inputenc}
\usepackage{amsfonts, amssymb, amsmath, amsthm, wasysym}
\usepackage{mathrsfs}
\usepackage{graphicx}
\usepackage{xcolor}
%	\definecolor{aquamarine}{rgb}{0.5, 1.0, 0.83}
%	\definecolor{some-blue}{rgb}{0.0, 0.87, 0.87}
%	\definecolor{some-blue}{rgb}{0.12, 0.3, 0.17} %calpolypomonagreen 
%	\definecolor{some-blue}{rgb}{0.91, 0.41, 0.17}
%	\definecolor{some-blue}{rgb}{0.05, 0.5, 0.06}
\definecolor{blue-green}{rgb}{0.0, 0.85, 0.85} % deepskyblue
\definecolor{some-blue}{rgb}{0.12, 0.56, 1.0} %dodgerblue
\definecolor{chocolate}{rgb}{0.48, 0.25, 0.0}

\usepackage[toc,page]{appendix}
\usepackage{caption}

\usepackage{transparent}
\usepackage{eepic}
\usepackage{tocbibind} %la bibliografía aparece en la table of contents

\usepackage{tikz} %Dibujos en LaTeX desde mathcha.io

\usepackage{accents}
\usepackage{setspace}
%%%%%%%%%%%%%%%%%%%
%Para portada manuel mellado
%
%\usepackage{xcolor,graphicx}
%\usepackage{transparent}
%\usepackage{eepic}
%%\usepackage{fouriernc}

%%%%%%%%%%%%%%%%%%%%%%%%%%%%%%%%%%%%%%%5

% Package for footnotes
\usepackage{footnote}
%
%\renewcommand\footnote[1]{\par\vspace{0.5\baselineskip}\noindent\raggedright #1\par}

%% Define commands for grants, keywords, and MSC codes
%\newcommand{\grant}[1]{\footnote{Grant information: #1}}
%\newcommand{\keywords}[1]{\footnote{Keywords: #1}}
%\newcommand{\msc}[1]{\footnote{Mathematics Subject Classification: #1}}

\usepackage[colorlinks = true]{hyperref}
\hypersetup{
	pdfauthor={Javier Minguillón},
	pdftitle={A Note on Almost Everywhere Convergence Along Tangential Curves to the Schrödinger Equation Initial Datum},
	pdfkeywords={Schrödinger equation, pointwise convergence, nontangential convergence},
	%	pdfsubject={Your Subject},
	pdfcreator={LaTeX with hyperref},
	pdfproducer={pdfLaTeX}
	%	pdfinfo={
		%		Title={A note on a.e. convergence along tangential curves to the Schrödinger equation initial datum},
		%		Author={Javier Minguillón Sánchez},
		%	}
	colorlinks=true,
	linkcolor={blue!80!black},
	citecolor={blue!50!black},
	urlcolor={blue!50!black}
}

\def\R{\mathbb R}
\def\Z{\mathbb Z}

%\def\Re{\mathcal R}

%%%%
%%%% Specific operators and constants %%%%

%%%%%%%%%%%% From my TFM/Master's Thesis %%%%%%%%%%%%%%%%%%%

\usepackage{accents}

 %spherical operator
 %disk multiplier

%%%%%%%%%%%% Others %%%%%%%%%%%%%%%%%%%

\newcommand{\largetwoN}{2N}
\newcommand{\calpha}{\tau}
\newcommand{\GammaR}[1]{\Gamma^{\alpha}\left(#1\right)} %uncomment this. Comment above. Only when ready to put tau=1 everywhere after Theorem 1.1

 %%new

%%Schrödinger operator at time t
\newcommand{\schtf}[1]{e^{it\Delta}f \left(#1\right)}
\newcommand{\dxt}{dx 
	%\times
	dt}

%%

%to make \chi have proper subindexes and superindexes
\usepackage{xparse}

\let\latexchi\chi
\makeatletter
\renewcommand\chi{\@ifnextchar_\sub@chi\latexchi}
\newcommand{\sub@chi}[2]{% #1 is _, #2 is the subscript
	\@ifnextchar^{\subsup@chi{#2}}{\latexchi^{}_{#2}}%
}
\newcommand{\subsup@chi}[3]{% #1 is the subscript, #2 is ^, #3 is the superscript
	\latexchi_{#1}^{#3}%
}
\makeatother

\newcommand{\verluego}[1]{{\textsc{\color{red} 
			%			#1
}}}
\newcommand{\verluegointerno}[1]{{\color{chocolate}
		%			#1
}}

\newcommand{\verluegotres}[1]{{{\color{blue} 
			%			#1
}}}
\newcommand{\verluegocuatro}[1]{{\color{black} 
		%						#1
}}
\newcommand{\verluegoalpha}[1]{{\color{blue-green} 
		%						#1
}}
\newcommand{\verluegoalphados}[1]{{\color{some-blue} 
		%		#1
}}

%Comentar línea del  1 para volver a tenerlo

%In order to have other sizes of font in References
%\renewcommand*{\bibfont}{\footnotesize}

%\usepackage{etoolbox}
%\appto{\bibsetup}{\footnotesize}

\newtheorem{theorem}{Theorem}
[section]
%Pon section si quieres que especifique la sección. chapter para el capítulo solamente
%la línea anterior inicializa el contador theorem. Las siguientes lo utilizan

%\newtheorem{obs}[theorem]{Observación}

\theoremstyle{definition}
\newtheorem{defin}[theorem]{Definition}
%[chapter]

\newtheorem{remark}[theorem]{Remark}

%%%%%%%%%%%%%%%%%%%%%%%%%%%%%%%%%%%%% Package: thmtools

%\usepackage{thmtools} % To create custom theoremstyles like 'definition'
%
%\declaretheoremstyle[
%headfont=\normalfont\bfseries\itshape,
%numbered=unless unique,
%%numbered=theorem,
%bodyfont=\normalfont,
%spaceabove=1em plus 0.75em minus 0.25em,
%spacebelow=1em plus 0.75em minus 0.25em,
%qed={$\square$},
%]{definition2}
%
%\theoremstyle{definition2}
%\newtheorem{defin2}[theorem]{Definition}
%
%\declaretheorem[
%style=definition2,
%title=Definition,
%refname={definition,definitions},
%Refname={Definition,Definitions}
%]{defin3}

%%%%%%%%%%%%%%%%%%%%%%%%%%%%%%%%%%%%%%%% BIBLATEX ONLY
%%%%%Hay que hacer un proceso para que compile y aparezcan referencias. 
%%%%%Esto no incluye el estilo de cita. Ni el hiperenlace de la cita a la referencia. Tampoco el hiperenlace de la URL
%Herramientas->PdfLaTeX
%Herramientas->biber
%Herramientas->PdfLaTeX
%Herramientas->biber

%\usepackage{biblatex-ams}
\usepackage[
backend=biber,
giveninits=true, % CAREFUL this will force initials (inits) instead of full names on authors.
%sorting=nty % Toggle if the References are in chaos, disarray, disorder...
%style=alphabetic,
%style=authoryear, % My favourite 
style=numeric, %the standard? There seems to be no standard…
%style=trad-abbrv,
%style=acm,
%style=amsalpha,
%style=amsplain,
%bibstyle=chem-angew
]{biblatex}

\DeclareFieldFormat[article]{author}{\textsc{#1}}
\DeclareFieldFormat[article]{title}{\textit{#1}}
\DeclareFieldFormat[article]{journaltitle}{{#1}}
\DeclareFieldFormat[article]{volume}{{#1}}
\DeclareFieldFormat[article]{number}{{ no. #1}}
\DeclareFieldFormat[article]{pages}{#1}
\DeclareFieldFormat[article]{year}{(#1)}

\DeclareFieldFormat[book]{title}{{#1}}
\DeclareFieldFormat[book]{publisher}{#1}

\DeclareFieldFormat[online]{title}{\textit{#1}}
%\DeclareFieldFormat[online]{year}{(#1)} % Does not work
\DeclareFieldFormat[online]{date}{(#1)} % Works!
\DeclareFieldFormat[online]{url}{\url{#1}}

% Honestly, if we manage to have the above three types of entries behave well automatically, it is a huge win. 
% Other types? Apply elbow grease. Do it manually on the bbl. Do it for entries like @incollection{carlesonAnalyticProblemsRelated1980}... 

% Declare the field format for the title to remove quotes
\DeclareFieldFormat[incollection]{title}{#1}

%To remove (visited on...) and other fields on every reference
\AtEveryBibitem{%
	\clearfield{month}%
	\clearfield{day}%
	\clearfield{urldate}%
		\clearfield{doi} % So that DOIs show up as Springer/JGA wants
	\clearfield{eprintclass}
	\clearfield{url}
	\clearfield{pubstate}
	\clearfield{editor}
	\clearfield{isbn}
	\clearfield{issn}
%	\clearfield{eprint}  % To prevent duplicate links on ArXiV items 1st
%	\clearfield{eprinttype} % To prevent duplicate links on ArXiV items 2nd
	\clearfield{journaltitle}  % Careful. Comment only when you want shortjournal names.
}

% Define a new bibliography macro to use shortjournal
\renewbibmacro*{journal}{%
	\ifboolexpr{
		test {\iffieldundef{shortjournal}}
		and
		test {\iffieldundef{journaltitle}}
	}
	{}
	{\printtext[journaltitle]{%
			\printfield[titlecase]{shortjournal}%
			\setunit{\subtitlepunct}%
			\printfield[titlecase]{journaltitle}}}
}

\renewbibmacro*{url+urldate}{%
	\printfield{url}%
}

%\renewbibmacro*{note+pages}{%
	%	\printfield{note}%
	%}

\renewbibmacro{in:}{} % This gets rid of 'in' within the phrase 'in proceedings of the ams' or whatever 

% Define a new command for small caps formatting

% Import the .bib file!! UNIFIED: DIRECTLY FROM ROOT DIRECTORY
\addbibresource{4_My_note_on_maximal_schrodinger_operator.bib} %Imports bibliography file

%% Make references show in footnotesize is messing the numbered entries of the References. No easy fix found
%\defbibenvironment{bibliography}
%{\list
%	{}
%	{\setlength{\leftmargin}{\bibhang}%
%		\setlength{\itemindent}{-\leftmargin}%
%		\setlength{\itemsep}{\bibitemsep}%
%		\setlength{\parsep}{\bibparsep}%
%		\small}% Change the font size here
%	\renewcommand*{\makelabel}[1]{##1}}
%{\endlist}
%{\item}

%%%
%\mkbibnamelast
 
%% BIBLATEX ONLY

%%%%%%%%%%%%%%%%% TITLES %%%%%%%%%%%%5
\date{}
\title{\sc A note on almost everywhere convergence along tangential curves to the Schrödinger equation initial datum}
\author{\sc Javier Minguillón}

%%%%%%%%%%%%%% DOCUMENT %%%%%%%%%%%%5
\begin{document}
	\maketitle
		\begin{abstract}
		\begin{spacing}{1.4} % Adjust the factor as needed	
			In this short note, we give an easy proof of the following result:  for $ n\geq 2, $ 
			$\underset{t\to0}{\lim} \,e^{it\Delta }f\left(x+\gamma(t)\right) = f(x) $
			almost everywhere whenever $ \gamma $ is an $ \alpha- $Hölder curve with $ \frac12\leq \alpha\leq 1 $ and $ f\in H^s(\R^n) $, with $ s > \frac{n}{2(n+1)} $. This is the optimal range of regularity up to the endpoint.
		\end{spacing}
	\end{abstract}
	
	\footnote{2020 \textit{Mathematics Subject Classification.} 
	35Q41, 42B25, 42B37.
	}
	\footnote{\textit{Keywords.} Schrödinger equation, Schrödinger maximal function, almost everywhere convergence, tangential convergence.
	}
	\footnote{
		Javier Minguillon was supported by Grant PID2022-142202NB-I00 / AEI / 10.13039/501100011033.
	}

\section{Introduction}
Consider the linear Schrödinger equation on $ \R^n\times \R $, $ n\geq 1, $ given by
\begin{equation}
	\label{equation_schrodinger with boundary datum}
	\begin{cases}
		i\partial_t u(x,t) - \Delta_x u (x,t) = 0,
		\\
		u(x,0) = f(x).
	\end{cases}
\end{equation} 
Its solution can be formally expressed as
\begin{equation}
	\schtf{x} = \int_{\R^n} e^{2\pi i x\cdot \xi} e^{2\pi it|\xi|^2} \widehat f(\xi) d\xi. 
\end{equation}
It was first proposed by Carleson in 1980 \cite{Carleson1980AnalyticProblemsRelated} to find the values of $ s>0 $ for which
\begin{equation}\label{equation_classical ae convergence}
	\lim_{t\to 0} \schtf{x} = f(x), \quad \text{a.e.}\quad x\in \R^n,
\end{equation}
holds true for all functions $ f\in H^s(\R^n) $. Carleson \cite{Carleson1980AnalyticProblemsRelated} proved this convergence when $ n=1 $ and $ s\geq \frac14$. Later, in 2006, Dahlberg and Kenig \cite{DahlbergKenig2006NoteAlmostEverywhere} showed that 
\eqref{equation_classical ae convergence} 
was false whenever $ s<\frac14. $ 

Many researchers have worked in this problem throughout the years. Authors such as Carbery, Cowling, Vega, Sjölin, Moyua, Vargas, Tao, S.Lee and Bourgain to name a few. 
More recently, the problem has been solved in higher dimensions, except for the endpoint. In 2016, Bourgain \cite{Bourgain2016NoteSchrodingerMaximal} proved the necessity of $ s\geq \frac{n}{2(n+1)} $ in order to have 
\eqref{equation_classical ae convergence}. In 2017, Du, Guth and Li \cite{DuGuthLi2017SharpSchrodingerMaximal} proved the sufficiency of the condition $ s>\frac13 $ when $ n=2. $
Later, in 2019, Du and Zhang \cite{DuZhang2019SharpEstimatesSchrodinger} proved the sufficiency of $ s>\frac{n}{2(n+1)} $ for general $ n\geq 3. $ A more detailed history of the problem can be found in \cite{DuZhang2019SharpEstimatesSchrodinger} and the references therein.

Take a solution of \eqref{equation_schrodinger with boundary datum}. Consider a set of curves $ \rho(x,t) = x + \gamma(t) $ that are bi-Lipschitz in $ x\in \R^n $ and $ \alpha- $Hölder in $ t\in\R $. Cho, Lee and Vargas \cite{ChoLeeVargas2012ProblemsPointwiseConvergence}
proved in 2012 that $ u\left(\rho(x,t),t\right) $ converges to $ f(x) $ almost everywhere as $ t\to 0 $
%along one such curve $ \rho $ 
%holds true 
in $ n=1 $ when $ s>\max\left\lbrace \frac12-\alpha, \frac14 \right\rbrace $. They also found this to be sharp up to the endpoint. Later, in 2021, Li and Wang \cite{LiWang2021ConvergencePropertiesGeneralized} proved that convergence in dimension $ n = 2 $, for index $ \frac12\leq \alpha\leq 1 $ and the range $ s > \frac38 $. 
%This left the question open on the range $ \frac13\leq s <\frac38 $. 
In 2023, Cao and Miao \cite{CaoMiao2023SharpPointwiseConvergence} gave a proof for general dimension $ n $, index $ \frac12\leq \alpha\leq 1 $, and $ s>\frac{n}{2(n+1)} $. Their proof followed the argument presented in \cite{DuZhang2019SharpEstimatesSchrodinger} and relied on techniques such as 
%the locally constant property, 
dyadic pigeonholing, broad-narrow analysis
%, parabolic rescaling 
and induction on scales.

Our objective is to give an easy proof of the result in \cite{CaoMiao2023SharpPointwiseConvergence} without using the aforementioned techniques.

%
%Let $ B^n_1(0) $ denote the unit ball in $ \R^n $. 
Fix $0<\alpha\leq 1$ and $\calpha\geq1.$ We consider the family of curves, 
\begin{equation}
	\Gamma_{\calpha}^\alpha 
	%	\Gammaone
	:= 
	\left\lbrace
	\gamma:[0,1] \to \mathbb R^n 
	: \text{for all } t,t'\in [0,1],\; |\gamma(t)-\gamma(t')| \leq \calpha|t-t'|^\alpha
	\right\rbrace.
\end{equation} 
The convergence result follows from the maximal bound below. 
%Given $ x_0\in \R^n,  $ and $ r>0, $ we denote the $ r- $ball centered at $ x_0 $ by $ B_r^n(x_0). $
Let $ B_r^n(x_0) $ denote the ball of radius $ r>0  $ centered at $ x_0\in\R^n $.

\begin{theorem}\label{theorem_Theorem 1.2 of Du-Zhang modified}

	Let $n \geq 1$. Fix $ \frac12\leq  \alpha\leq 1 $ and $\calpha\geq1 $. 
	%	Let $ \gamma\in \Gamma_{\calpha}^\alpha $
	For any $\varepsilon>0$, there exists a positive constant $C_{\varepsilon,\calpha}$ such that, for every $ \gamma\in\Gamma_{\calpha}^\alpha $,
	\begin{equation}\label{equation_maximal estimate non-reduced}
		\left\|
		\sup _{0<t<1}\left| 
		\schtf{x + \gamma(t)}
		\right|
		\right\|_{L^2\left(B^{n}_1(0)\right)} 
		\leq 
		C_{\varepsilon,\calpha}
		\|f\|_{
			H^{\frac{n}{2(n+1)} \verluegoalphados{+\left(n+2\right)(1-2\alpha)}
				+\epsilon}
			\left(\R^n\right)
		},
	\end{equation}
	holds for all 
	$ f $
	$ \in H^{\frac1{2(n+1)}+\epsilon }(\R^n). $
	%	$R \geq 1$ and all $f$ with $\operatorname{supp} \widehat{f} 
	%	\subset 
	%	B^n_1(0) $. 
\end{theorem}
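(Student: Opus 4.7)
The approach is to reduce the tangential maximal bound directly to the sharp non-tangential maximal estimate of Du--Zhang \cite{DuZhang2019SharpEstimatesSchrodinger}, exploiting the elementary observation that any $\alpha$-H\"older curve with $\alpha\geq \tfrac12$ stays inside the parabolic ball $B\bigl(\gamma(0),\calpha\, t^{1/2}\bigr)$ up to time $t$. A standard Littlewood--Paley decomposition first reduces the theorem to the frequency-localized estimate: for every $N\geq 1$ and every $f$ with $\widehat{f}$ supported in $\{|\xi|\sim N\}$,
\begin{equation*}
    \left\|\sup_{0<t<1}\left|\schtf{x+\gamma(t)}\right|\right\|_{L^{2}(B_1^n(0))} \lesssim_{\varepsilon} N^{\frac{n}{2(n+1)}+(n+2)(1-2\alpha)+\varepsilon}\|f\|_{L^2(\R^n)};
\end{equation*}
a square-function summation over dyadic frequency shells then retrieves the Sobolev norm on the right-hand side of \eqref{equation_maximal estimate non-reduced}.

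After translating $x$ to arrange $\gamma(0)=0$, the H\"older hypothesis yields the pointwise domination
\begin{equation*}
    \sup_{0<t<1}\left|\schtf{x+\gamma(t)}\right| \;\leq\; \sup_{0<t<1}\sup_{|y-x|\leq \calpha\, t^{\alpha}}\left|\schtf{y}\right|,
\end{equation*}
which removes $\gamma$ from the problem and leaves a non-tangential maximal function at the (sub-)parabolic rate $t^{\alpha}$. Since $e^{it\Delta}f$ has the same frequency support $\{|\xi|\sim N\}$ as $f$, Bernstein/Peetre's inequality supplies the local-constancy estimate
\begin{equation*}
    \sup_{|y-x|\leq \rho}\left|\schtf{y}\right|^{2} \;\lesssim\; (1+N\rho)^{n}\,\mathcal{M}\!\left[\left|\schtf{\cdot}\right|^{2}\right](x),
\end{equation*}
with $\mathcal{M}$ the Hardy--Littlewood maximal operator. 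Decomposing $t$ dyadically and applying this bound with $\rho = \calpha\, t^{\alpha}$ on each scale, then parabolically rescaling each dyadic slab into an instance of the sharp Du--Zhang bound $\bigl\|\sup_{0<t<1}|e^{it\Delta}f|\bigr\|_{L^2(B_1^n)}\lesssim_\varepsilon N^{\frac{n}{2(n+1)}+\varepsilon}\|f\|_{L^2}$, and summing the resulting geometric series produces the claimed exponent. The correction $(n+2)(1-2\alpha)$ arises as the worst-case local-constancy loss, picked up near the parabolic time scale $t\sim N^{-2}$, and vanishes identically at $\alpha=\tfrac12$, which recovers Du--Zhang's sharp exponent.

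The main obstacle lies in this last step, because the two suprema (in $t$ and in $y$) must be coordinated: a naive pointwise application of Peetre's bound loses too much, since the uniform factor $(1+N\calpha)^{n/2}\sim N^{n/2}$ would dominate the Du--Zhang gain. The proper handling requires a parabolic rescaling on each dyadic time slab, matching the local-constancy loss to the appropriate Du--Zhang bound at the rescaled frequency, and only then summing over dyadic scales.
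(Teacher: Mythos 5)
The fatal step is the very first reduction, not the local-constancy bookkeeping you flag later. The pointwise domination $\sup_{0<t<1}|e^{it\Delta}f(x+\gamma(t))|\le \sup_{0<t<1}\sup_{|y-x|\le \tau t^{\alpha}}|e^{it\Delta}f(y)|$ is of course true, but the dominating non-tangential maximal operator does \emph{not} satisfy the frequency-localized estimate you then need: its $L^2(B_1^n(0))$ norm on frequency-$N$ data is $\gtrsim N^{n/2}$. Indeed, take $\widehat f(\xi)=N^{-n/2}e^{-\pi i|\xi|^2}\chi_{\{|\xi|\sim N\}}(\xi)$, so $\|f\|_2\sim 1$ and $|e^{i\Delta/2}f(y)|\sim N^{n/2}$ for $|y|\lesssim N^{-1}$. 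Since $\tau(1/2)^{\alpha}\ge 1/2$, the approach region attached to every $x\in B^n_{1/2}(0)$ contains the space-time point $(0,\tfrac12)$, so the non-tangential maximal function is $\gtrsim N^{n/2}$ on all of $B^n_{1/2}(0)$, which is incompatible with $N^{\frac{n}{2(n+1)}+\varepsilon}\|f\|_2$ for large $N$ (and a fortiori with your exponent, which is smaller still for $\alpha>\tfrac12$). This is exactly the Sjögren--Sjölin phenomenon: convergence over an approach region (equivalently, with the supremum over the curve family taken \emph{inside} the $L^2$ norm) requires $s\ge n/2$, whereas the theorem asserts a bound for each fixed $\gamma$, uniform over $\gamma\in\Gamma^{\alpha}_{\tau}$, with the supremum over curves kept \emph{outside} the norm. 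No refinement of the Peetre/rescaling step can repair this, because the object you reduced to is genuinely too large. (Secondary inaccuracies: the loss $(1+N\tau t^{\alpha})^{n/2}$ is invariant under parabolic rescaling, so rescaling each dyadic slab does not shrink it; it is $O(1)$ near $t\sim N^{-2}$ and worst, $\sim(\tau N)^{n/2}$, near $t\sim 1$ --- the opposite of what you assert; and $(n+2)(1-2\alpha)\le 0$ cannot arise as a ``loss,'' while an exponent strictly below $\frac{n}{2(n+1)}$ would contradict Bourgain's necessary condition already for $\gamma\equiv 0$.)

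The paper keeps the curve fixed throughout and never decouples the spatial and temporal suprema. After Littlewood--Paley, time localization and parabolic rescaling to unit frequency (Theorem \ref{theorem_Theorem 1.3. from Du Zhang modified}), the supremum in $t\in(0,R]$ is reduced, via a unit-scale locally constant property that is now lossless, to an $L^2$ norm over a \emph{sparse} union $X$ of unit space-time cubes: one unit time interval per unit spatial cube, chosen where the supremum is essentially attained, so that the density satisfies $\phi_{X,n}\le 1$. The black box imported from Du--Zhang is then the space-time $L^2$ estimate on sparse cube families (Theorem \ref{theorem_Cor1.7 of Du Zhang}), not the maximal estimate. The curve is absorbed at exactly one point: since $\gamma\in\GammaR{R^{-1}}$ and $\alpha\ge\tfrac12$, the rescaled curve $R\gamma(t/R^2)$ moves by at most $R^{1-2\alpha}\le 1$ over each unit time interval, so translating the sparse family along the curve changes the density $\phi$ only by a constant (Theorem \ref{theorem_Cor1.7 of Du Zhang modified}). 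If you wish to pursue a route through approach regions, you would have to prove the sparse-cube estimate for the translated cubes directly --- which is what the paper does.
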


\begin{remark}
	A change of variables shows that it is enough to consider the case $ \tau = 1 $. From now on we assume $ \tau = 1 .$
\end{remark}
Then, we can reduce Theorem \ref{theorem_Theorem 1.2 of Du-Zhang modified} as in \cite{DuZhang2019SharpEstimatesSchrodinger}.
% No entiendo bien esta cita, Ana. Esta reducción no se hace en Du Zhang. No hay curvas. No se hace en Li Wang. Dejan C_alpha genérica.
%as in \cite{LiWang2021ConvergencePropertiesGeneralized}
We begin with a definition. 

\begin{defin}
	Fixed $0<\alpha\leq 1$ and $ R>1 $, we define
	\begin{equation}
		\GammaR{R^{-1}}
		:= 
		\left\lbrace
		\gamma:[0,R^{-1}] \to \mathbb R^n 
		: \text{for all } t,t'\in [0,R^{-1}],\; |\gamma(t)-\gamma(t')| \leq |t-t'|^\alpha
		\right\rbrace.
	\end{equation} 
\end{defin}
By Littlewood-Paley decomposition, the time localization lemma (e.g. Lemma 3.1 in S.Lee \cite{Lee2006PointwiseConvergenceSolutions}) and parabolic rescaling, Theorem \ref{theorem_Theorem 1.2 of Du-Zhang modified} can be reduced to the following Theorem \ref{theorem_Theorem 1.3. from Du Zhang modified}.
% \ref{theorem_Theorem 1.3. from Du Zhang modified}.

\begin{theorem}\label{theorem_Theorem 1.3. from Du Zhang modified}
	\verluegointerno{\textbf{Theorem 1.3 from Du Zhang modified.}}
	Let $n \geq 1$ and $ \frac12\leq \alpha <1 $. 
	%	Let $ \gamma\in \GammaR{R^{-1}} $. 
	For any $\varepsilon>0$, there exists a constant $C_{\varepsilon}$ such that, for all $ \gamma\in \GammaR{R^{-1}} $,
	\begin{equation}
		\left\|
		\sup _{0<t \leq R}\left| 
		e^{i t \Delta} f 
		\left(
		x + R\gamma\left(\frac{t}{R^2}\right)
		\right)
		\right|
		\right\|_{L^2\left(B^n_R(0)\right)} 
		\leq 
		C_{\varepsilon} R^{\frac{n}{2(n+1)}+\varepsilon}
		%		\verluegoalphados{R^{(n+2)(1-2\alpha)}}
		\verluegoalphados{R^{(n+2)(1-2\alpha)}}
		\|f\|_2.
	\end{equation}
	holds for all $R \geq 1$ and all $f$ with $\operatorname{supp} \widehat{f} \subset A(1)=\left\{\xi \in \mathbb{R}^n:|\xi| \sim 1\right\}$. 
\end{theorem}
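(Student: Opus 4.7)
The plan is to reduce Theorem~\ref{theorem_Theorem 1.3. from Du Zhang modified} to the classical ($\gamma\equiv 0$) Du--Zhang maximal estimate of \cite{DuZhang2019SharpEstimatesSchrodinger} via a time discretization adapted to the Hölder regularity of $\gamma$, applying Du--Zhang as a black box. Setting $\gamma_R(t):=R\gamma(t/R^2)$, the assumption $\gamma\in\GammaR{R^{-1}}$ gives $|\gamma_R(t)|\leq R^{1-\alpha}$ and $|\gamma_R(t)-\gamma_R(s)|\leq R^{1-2\alpha}|t-s|^\alpha$ for $s,t\in[0,R]$; for $\alpha\geq 1/2$ the displacement is controlled at the parabolic scale of the Schrödinger evolution.

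First, I would partition $[0,R]$ into $N\sim R^{(1-\alpha)/\alpha}$ consecutive sub-intervals $I_j$ of length $r_0\sim R^{(2\alpha-1)/\alpha}$ chosen so that $\gamma_R$ varies by at most a constant on each $I_j$. Fixing a reference point $y_j:=\gamma_R(t_j)$ for some $t_j\in I_j$, the trivial pointwise inclusion $\sup_{t\in I_j}|\schtf{x+\gamma_R(t)}|\leq \sup_{t\in I_j,\;|z|\leq 1}|\schtf{x+y_j+z}|$ holds. Since $\operatorname{supp}\widehat{f}\subset A(1)$, the function $y\mapsto e^{it\Delta}f(y)$ is band-limited at unit frequency, and Bernstein's inequality absorbs the sup over the unit ball in $z$ into a local $L^2$ average; integrating over $x\in B^n_R(0)$ and shifting by $y_j$ (with $|y_j|\leq R^{1-\alpha}\ll R$), this yields
\[
\Big\|\sup_{t\in I_j,\;|z|\leq 1}|\schtf{\cdot+y_j+z}|\Big\|^2_{L^2(B^n_R(0))}\lesssim \Big\|\sup_{t\in I_j}|\schtf{\cdot}|\Big\|^2_{L^2(B^n_{CR}(0))}.
\]

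Next I would apply the $\gamma\equiv 0$ case of the present theorem (the classical Du--Zhang estimate) to each right-hand side---bounding $\sup_{t\in I_j}$ by $\sup_{t\in[0,R]}$---and $\ell^2$-sum over $j$. Dominating the full sup over $[0,R]$ by the sub-interval sups gives
\[
\Big\|\sup_{0<t\leq R}|\schtf{\cdot+\gamma_R(t)}|\Big\|^2_{L^2(B^n_R(0))}\lesssim N\cdot R^{\frac{n}{n+1}+2\varepsilon}\|f\|_2^2,
\]
and substituting $N\sim R^{(1-\alpha)/\alpha}$ and taking square roots yields an estimate of the form claimed, up to matching the resulting exponent with the stated factor $R^{(n+2)(1-2\alpha)}$.

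The hardest part is making the $\ell^2$ summation above tight enough to match the stated exponent. A naive union bound contributes $\sqrt{N}=R^{(1-\alpha)/(2\alpha)}$, which at the critical endpoint $\alpha=1/2$ is $R^{1/2}$---larger than the stated extra factor $R^{(n+2)(1-2\alpha)}$ (which vanishes at $\alpha=1/2$). Replacing the $\ell^2$ sum by a Bernstein-in-time bound on each $I_j$ (using that $t\mapsto e^{it\Delta}f(x)$ has temporal Fourier support in $|\tau|\sim 1$) only recovers the aggregate Carleson-type exponent $R^{1/2}$. Recovering the sharp Du--Zhang exponent without invoking the broad--narrow / induction-on-scales machinery of \cite{DuZhang2019SharpEstimatesSchrodinger,CaoMiao2023SharpPointwiseConvergence} most likely requires a refined use of Du--Zhang on each short sub-interval $I_j$---one sensitive to both the ball radius $R$ and the shorter time length $r_0\ll R$---and pinpointing this single refinement is the essential ingredient of the ``easy'' proof.
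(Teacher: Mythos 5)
Your reduction to the classical Du--Zhang maximal estimate via a time partition cannot be closed, and you have in fact located the fatal point yourself: the $\ell^2$ (or union) bound over the $N\sim R^{(1-\alpha)/\alpha}$ sub-intervals costs $\sqrt{N}$, which at the critical case $\alpha=\tfrac12$ is $R^{1/2}$ and destroys the gain down to the Carleson-type exponent. No refinement of the black-box \emph{maximal} estimate on short time intervals repairs this: even applying the scale-$r_0$ maximal bound on each $I_j$ and summing in quadrature gives $\sqrt{N}\,r_0^{n/(2(n+1))}$, which is again $R^{1/2}$ at $\alpha=\tfrac12$. The structural reason is that for each $x$ the supremum over $t\in(0,R]$ lives at a \emph{single} time $t(x)$, and any argument that treats all $N$ time windows democratically has already discarded this sparsity.

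The ingredient actually needed is not the maximal estimate but the density-sensitive local $L^2$ estimate of Du--Zhang (Theorem \ref{theorem_Cor1.7 of Du Zhang} here, Corollary 1.7 in \cite{DuZhang2019SharpEstimatesSchrodinger}): $\|e^{it\Delta}f\|_{L^2(X,dx\,dt)}\lesssim_\varepsilon \phi^{1/(n+1)}R^{\beta/(2(n+1))+\varepsilon}\|f\|_2$, where $\phi$ measures the $\beta$-dimensional density of the union of unit cubes $X$. The paper's proof linearizes the supremum by choosing, for each spatial unit cube $B_1^n(x_0)$, one time $\widetilde{t_0}(x_0)$ where the sup is essentially attained, then uses a modulation/translation stability inequality (display \eqref{equation_stability lemma aka locally constant property substitute}) to replace pointwise values by $L^2$ averages over the unit space--time cubes $B_2^{n+1}(x_0,\widetilde{t_0})$. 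The resulting set $X$ has exactly one cube per spatial column, so $\phi_{X,n}\le 1$ with $\beta=n$, and the $\beta=n$ (rather than $\beta=n+1$) exponent is precisely what produces $R^{n/(2(n+1))}$. The curve enters only through the observation that for $\alpha\ge\tfrac12$ the rescaled curve $\theta(t)=R\gamma(t/R^2)$ moves by $O(1)$ over unit time intervals, so composing with $\theta$ maps $X$ to a set $Y$ of comparable $\beta$-density (Theorem \ref{theorem_Cor1.7 of Du Zhang modified}). Your proposal never invokes a density-sensitive estimate, and without one the sharp exponent is out of reach; this is a genuine gap, not a fixable loss of constants.
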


\section{Intermediate results} %I do not like this name

	We consider the following result from \cite{DuZhang2019SharpEstimatesSchrodinger}.
	
	\begin{theorem}[Corollary 1.7 in \cite{DuZhang2019SharpEstimatesSchrodinger}]\label{theorem_Cor1.7 of Du Zhang}
		Let $n \geq 1$. For any $\varepsilon>0$, there exists a constant $C_{\varepsilon}$ such that the following holds for all $R \geq 1$ and all $f$ with $\operatorname{supp} \widehat{f} \subset B_1^n(0)$. Suppose that $X=\cup_k B_k$ is a union of lattice unit cubes in $B_R^{n+1}(0)$. Let $1 \leq \beta \leq n+1$ and
		
		\begin{equation}\label{equation_beta density of X}
			\phi:= \phi_{X,\beta}:=
			\max _{\substack{ B_r^{n+1}(x') \subset B_R^{n+1}(0) \\ x^{\prime} \in \mathbb{R}^{n+1}, r \geq 1}} 
			\frac{\#\left\{B_k: B_k \subset B^{n+1}_r\left(x^{\prime} \right)\right\}}{r^\beta} .
		\end{equation}
		Then
		\begin{equation}
			\left\|e^{i t \Delta} f\right\|_{L^2(X,\dxt)} 
			%\left\|\schtf{x}\right\|_{L^2(X,\dxt)} 
			\leq C_{\varepsilon} \phi^{\frac{1}{n+1}} R^{\frac{\beta}{2(n+1)}+\varepsilon}\|f\|_2.
		\end{equation}
		
	\end{theorem}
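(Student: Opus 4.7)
The plan is to deduce the weighted $L^{2}$ bound of Corollary \ref{theorem_Cor1.7 of Du Zhang} from the refined Strichartz inequality at the heart of \cite{DuZhang2019SharpEstimatesSchrodinger}, combined with Hölder's inequality and the crude count $|X|\le \phi R^{\beta}$, which is obtained simply by evaluating the definition of $\phi = \phi_{X,\beta}$ at $r=R$ with $B^{n+1}_R(x')=B^{n+1}_R(0)$.

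The core input is a refined $L^{p_0}$-bound with $p_0 := 2(n+1)/n$, of the form
\[
  \|e^{it\Delta}f\|_{L^{p_0}(X)} \,\le\, C_\varepsilon\, \phi^{1/(2(n+1))}\, R^{\varepsilon}\, \|f\|_{2},
\]
valid for $\operatorname{supp}\widehat{f}\subset B_1^{n}(0)$ and $X$ as in Corollary \ref{theorem_Cor1.7 of Du Zhang}. This is the essential refinement over the standard Strichartz inequality: the $\beta$-dimensional structure of the set $X$ enters through the factor $\phi^{1/(2(n+1))}$, which is invisible in the flat global Strichartz bound.

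Granting this input, I finish by Hölder. Since $p_0>2$, one has $\|g\|_{L^2(X)}\le |X|^{1/2-1/p_0}\|g\|_{L^{p_0}(X)}$, and a direct computation gives $\tfrac12 - \tfrac1{p_0} = \tfrac1{2(n+1)}$. Therefore
\[
  \|e^{it\Delta}f\|_{L^2(X)} \,\le\, |X|^{1/(2(n+1))}\, \|e^{it\Delta}f\|_{L^{p_0}(X)} \,\le\, (\phi R^{\beta})^{1/(2(n+1))}\cdot C_\varepsilon\, \phi^{1/(2(n+1))}\, R^{\varepsilon}\, \|f\|_{2},
\]
and the two factors $\phi^{1/(2(n+1))}$ combine to $\phi^{1/(n+1)}$, yielding the advertised bound $C_\varepsilon\, \phi^{1/(n+1)}\, R^{\beta/(2(n+1))+\varepsilon}\|f\|_2$.

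The main obstacle is the refined $L^{p_0}$-Strichartz inequality itself, which is the deep technical core of \cite{DuZhang2019SharpEstimatesSchrodinger}. Its proof proceeds by induction on the scale $R$: at each step one performs a polynomial partitioning of $B_R^{n+1}(0)$ to localize $e^{it\Delta}f$ onto cells on which it can be analyzed, then carries out a broad--narrow decomposition distinguishing cells where $\widehat{f}$ is transversally spread across many frequency caps (the broad case, handled by a $k$-broad Strichartz estimate together with Bourgain--Demeter $\ell^{2}$-decoupling) from cells where $\widehat{f}$ concentrates in a few caps (the narrow case, reduced to a lower scale by parabolic rescaling). The induction parameters must be balanced with care so as to close the argument precisely at the exponent $\phi^{1/(2(n+1))}$; this balancing is the delicate combinatorial heart of the argument and is what forces the critical $s_n = n/(2(n+1))$ threshold to appear.
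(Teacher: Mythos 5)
You should first note that the paper does not prove this statement at all: it is imported verbatim as Corollary~1.7 of \cite{DuZhang2019SharpEstimatesSchrodinger} and used as a black box, so there is no internal proof to compare against. Your reduction is nevertheless correct and is essentially the deduction Du and Zhang themselves give: the exponent arithmetic checks out, since $\tfrac12-\tfrac{n}{2(n+1)}=\tfrac{1}{2(n+1)}$, the crude count $|X|\le \phi R^{\beta}$ does follow from taking $r=R$, $x'=0$ in \eqref{equation_beta density of X}, and $(\phi R^{\beta})^{1/(2(n+1))}\cdot \phi^{1/(2(n+1))}=\phi^{1/(n+1)}R^{\beta/(2(n+1))}$ as claimed. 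The caveat is that all of the mathematical content lives in your ``core input,'' the refined $L^{2(n+1)/n}$ estimate with the factor $\phi^{1/(2(n+1))}$, which you assert rather than prove; as a self-contained argument the proposal is therefore a correct two-line reduction to a theorem that is as deep as the statement itself. One inaccuracy in your description of that input's proof: polynomial partitioning is the mechanism of Du--Guth--Li in the case $n=2$, whereas the higher-dimensional argument of Du--Zhang deliberately avoids it, running instead a Bourgain--Guth broad--narrow decomposition whose broad part is controlled by multilinear refined Strichartz estimates obtained from $\ell^2$-decoupling and induction on scales. Since the present paper's whole point is to treat this estimate as a cited input and build the H\"older-curve generalization on top of it, your treatment is consistent with, and no less complete than, the paper's.
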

	
	We generalize the above result to include $ \alpha- $Hölder curves. 
	%as we can see in Theorem \ref{theorem_Cor1.7 of Du Zhang modified} below.

	\begin{theorem}\label{theorem_Cor1.7 of Du Zhang modified}
		\verluegointerno{\bf Corollary 1.7 from Du Zhang modified.}
		Let $n \geq 1$ and $ \frac12\leq \alpha \leq 1 $. 
		%Pick $ \gamma\in \GammaR{R^{-1}} $. 
		For any $\varepsilon>0$, there exists a constant $C_{\varepsilon}$ such that the following holds for any $R \geq 1$, every $ \gamma\in\GammaR{R^{-1}} $ and all $f$ with $\operatorname{supp} \widehat{f} \subset B_1^n(0)$. Suppose that $X=\cup_k B_k$ is a union of lattice unit cubes in $B_R^{n+1}(0)$. Let $1 \leq \beta \leq n+1$ and $\phi$ be given by 
		\eqref{equation_beta density of X}.
		%\begin{equation}
		%	\phi:= \phi_{X,\beta} := 
		%\max _{\substack{ B_r^{n+1}(x') \subset B_R^{n+1}(0) \\ x^{\prime} \in \mathbb{R}^{n+1}, r \geq 1}} 
		%\frac{\#\left\{B_k: B_k \subset B_r^{n+1}\left(x^{\prime}\right)\right\}}{r^\beta} .
		%\end{equation}
		%Fix $ 1\geq \alpha\geq\frac{1}{2} $. Pick $ \gamma\in \GammaR{R^{-1}} $. 
		Then
		
		\begin{equation}
			\left\|e^{i t \Delta} f\left(x + R\gamma\left(\frac{t}{R^2}\right)\right)\right\|_{L^2(X, \dxt)} 
			\leq 
			%\verluegoalpha{R^{\frac{\beta(1-2\alpha)}{n+1}}}
			\verluegoalphados{R^{1-2\alpha}}
			C_{\varepsilon} \phi^{\frac{1}{n+1}} R^{\frac{\beta}{2(n+1)}+\varepsilon}\|f\|_2. 
		\end{equation}
	\end{theorem}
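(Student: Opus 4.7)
The plan is to reduce Theorem~\ref{theorem_Cor1.7 of Du Zhang modified} directly to Theorem~\ref{theorem_Cor1.7 of Du Zhang} via a change of variables. Define the shear map $\Phi(x,t) := (x + R\gamma(t/R^2),\, t)$, which preserves Lebesgue measure since at each fixed $t$ it is a pure spatial translation. This gives
\[
\|e^{it\Delta}f(x + R\gamma(t/R^2))\|_{L^2(X,\, dx\,dt)} = \|e^{it\Delta}f\|_{L^2(\Phi(X),\, dy\,dt)},
\]
and the problem is reduced to covering $\Phi(X)$ by a union $\tilde X^*$ of lattice unit cubes inside a ball of radius $O(R)$ whose $\beta$-density is comparable to $\phi$; Theorem~\ref{theorem_Cor1.7 of Du Zhang} applied to $\tilde X^*$ then delivers the bound.

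The decisive geometric input is the rescaled Hölder inequality
\[
|R\gamma(t/R^2) - R\gamma(t'/R^2)| \leq R^{1-2\alpha}|t-t'|^\alpha,
\]
together with the hypothesis $\alpha \geq 1/2$, which forces $R^{1-2\alpha} \leq 1$. Consequently, over any time interval of length one the spatial shift is at most one unit, so $\Phi$ sends each lattice unit cube $B_k \subset X$ to a tilted parallelotope covered by at most $C_n$ lattice unit cubes. Collecting these yields $\tilde X^*$ with $|\tilde X^*| \leq C_n |X|$, and since the total displacement of the curve on $[0,R]$ is at most $R^{1-\alpha} \leq R$ one has $\tilde X^* \subset B_{2R}^{n+1}(0)$.

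To check the density, I would fix a ball $B_r^{n+1}(x')$ with $r \geq 1$ and count the lattice cubes $C \in \tilde X^*$ contained in it. Each such $C$ comes from some $B_k \in X$ meeting the preimage $\Phi^{-1}(B_r^{n+1}(x'))$, which is a curved tube of time-extent $2r$ and spatial cross-section of radius at most $r$ whose center moves by at most $R^{1-2\alpha}(2r)^\alpha$. Because $R^{1-2\alpha} \leq 1$ and $r^\alpha \leq r$ (both valid for $\alpha \geq 1/2$, $r \geq 1$), this tube is contained in a ball of radius $O(r)$, so at most $C_n \phi\, r^\beta$ such $B_k$ can contribute by the very definition of $\phi$. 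This yields $\phi_{\tilde X^*,\beta} \leq C_n \phi$ and completes the reduction.

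The main obstacle is exactly this density-preservation step, and it is where the restriction $\alpha \geq 1/2$ enters: if $\alpha < 1/2$, the per-unit-time shear would exceed one and the covering argument would produce polynomial losses in $R$. The factor $R^{1-2\alpha}$ appearing in the statement is $\leq 1$ throughout the admissible range and is harmlessly absorbable into the implicit constants of the argument sketched above; its explicit presence is convenient later when one sums over dyadic density scales $\beta$ to derive the maximal estimate of Theorem~\ref{theorem_Theorem 1.3. from Du Zhang modified}, where the accumulated loss becomes the factor $R^{(n+2)(1-2\alpha)}$ that appears there.
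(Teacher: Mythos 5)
Your proposal is correct and follows essentially the same route as the paper: a measure-preserving shear in $x$ at each fixed $t$, the observation that $|R\gamma(t/R^2)-R\gamma(t'/R^2)|\leq R^{1-2\alpha}|t-t'|^{\alpha}\leq 1$ for $|t-t'|\leq 1$ when $\alpha\geq\tfrac12$ so that the image of each unit cube is covered by $O_n(1)$ lattice cubes, an application of Theorem~\ref{theorem_Cor1.7 of Du Zhang} to that lattice cover, and a density comparison obtained by recentering the test ball at $x'-R\gamma(s_0/R^2)$. The paper phrases this cube-by-cube (replacing each $B_k$ by $B_4^{n+1}(x_k+\theta(t_k),t_k)$ and taking the minimal lattice cover $Y$), but the substance, including where $\alpha\geq\tfrac12$ enters, is identical to yours.
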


	\begin{proof}[Proof of Theorem \ref{theorem_Cor1.7 of Du Zhang modified}]
		Denote 
		\begin{equation}\label{equation_notation theta to be rid of gamma}
			\theta(t) := \theta_{R
				%			,\gamma
			} 
			(t) := R\gamma \left(\frac t{R^2}\right).
		\end{equation} 
		We begin with
		\begin{align}
			%		&	
			\left\| 
			\schtf{x + R\gamma \left(\frac{t}{R^2}\right)}
			\right\|_{L^2(X, \dxt)}^2
			%		\\
			&
			= 
			\sum_{k}
			\int_{B_k} 
			\left|\schtf{x+\theta(t)}\right|^2
			\dxt.
			\intertext{
				Denote $ (x_k,t_k)$ to be the center of $ B_k. $ Then,
			}
			&\leq
			\sum_{k}
			\int_{t_k-1\verluegoalpha{\cdot R^{(2\alpha-1)}}}^{t_k+1\verluegoalpha{\cdot R^{(2\alpha-1)}}}
			\int_{B^n_1(x_k)} 
			\left|\schtf{x+\theta(t)}\right|^2 
			dxdt
			\\
			&= 
			\sum_{k} 
			%I am ommiting the summation index that is also ommited in the hypotheses
			\int_{t_k-1}^{t_k+1}
			\int_{B^n_1\left(x_k+\theta(t)\right)}
			\left|\schtf{y}\right|^2 
			dydt.
			\intertext{
				Recall that $ \gamma\in \GammaR{R^{-1}} $, and $ \alpha\geq \frac12 $. Thus, if $ t\in (t_k-1,t_k+1), $ then $ \left|\theta(t)-\theta(t_k)\right| \leq 1 $. Hence,
			}
			&\leq 
			\sum_{k}
			\int_{t_k-1}^{t_k+1}
			\int_{B^n_3\left(x_k+\theta(t_k)\right)}
			\left|\schtf {y} \right|^2
			dydt
			\\
			&
			%		= 
			\leq 
			%		\sum_k 
			%		\int_{B^{n+1}_4(x_k + \theta(t_k),t_k)} \left|\schtf{y}\right|^2dydt
			%%		.
			%		\\ 
			%		&
			%		= 
			\int_{\R^n}\sum_k \chi_{B^{n+1}_4\left(x_k + \theta(t_k),t_k\right)}(y) \left|\schtf{y}\right|^2dydt
			%		.
			\\ 
			&
			\leq 
			C
			\int_{\bigcup B^{n+1}_4(x_k + \theta(t_k),t_k)} \left|\schtf{y}\right|^2dydt,
		\end{align}
		for some $ C>0. $ Note that, if $ B_4(x_k + \theta(t_k), t_k) \cap B_4(x_i + \theta(t_i), t_i) \neq \emptyset, $ then $ |t_k-t_i|\leq 8. $ Since $ \alpha\geq \frac12,  $ we have $ |\theta(t_k)-\theta(t_i)| \leq 8 $. Hence, $ |x_k-x_i| \leq 16. $ Therefore, the balls $ \{B_4(x_k+\theta(t_k),t_k)\}_k $ have finite overlap $ C = C_n  $.
		%	$ \leq 16^{n+1}. $
		
		Define 
		$ Y = \bigcup_l Q_l $ to be the minimal union of lattice unit cubes satisfying that $ \bigcup_k B_4^{n+1}(x_k + \theta(t_k), t_k)\subset Y. $ We have proven that
		\begin{align}
			\left\|\schtf{x+\theta(t)}\right\|_{L^2(X, \dxt)}
			&\leq
			C_n %here!
			\left\|\schtf{x}\right\|_{L^2(Y,\dxt)}.
			\intertext{
				Hence by Theorem \ref{theorem_Cor1.7 of Du Zhang},} 
			&\leq 
			%	C_{\epsilon}
			C_{\epsilon,n}
			\phi_{Y,\beta}^{\frac1{n+1}}R^{\frac{\beta}{2(n+1)}+\epsilon} \|f\|_2.
		\end{align}
		We \textbf{claim} that  
		\begin{equation}\label{claim-densities are comparable}
			\phi_{Y,\beta} 
			\leq
			%		C
			%		C_n %here 
			%		\verluegoalpha{R^{\beta (1-2\alpha)}} 
			c_n
			\phi_{X,\beta}, 
		\end{equation} for some $ C>0. $ This would conclude the proof. 
		
		To prove \eqref{claim-densities are comparable}, note that, if $ Q_l \subset B_r^{n+1}(y_0,s_0) $, $ r\geq1, $ and $ Q_l\cap B_4^{n+1}(x_k + \theta(t_k),t_k) \neq \emptyset $, then $ B_k = B_1^{n+1}(x_k,t_k) \subset B_{r+5}^{n+1} (y_0-\theta(s_0), s_0). $
		%Triangular inequality
		Therefore,
		\begin{equation}
			\dfrac{\#\{Q_l : Q_l \subset B_r^{n+1}(y_0,s_0)\}}{r^{\beta}} 
			\leq 
			c_n
			\dfrac{
				\#\left\lbrace
				B_k : B_k \subset B_{r+5}^{n+1} (y_0 -\theta(s_0), s_0)
				\right\rbrace
			}
			{(r+5)^\beta}
			\cdot 
			\dfrac{(r+5)^\beta}{r^\beta}
			\leq c_n \phi_{X,\beta}.
		\end{equation}
	\end{proof}
\section{Proof of Theorem \ref{theorem_Theorem 1.3. from Du Zhang modified}} %Do not like this either
	Before the proof, let us introduce a stability property of the Schrödinger operator. More general versions of the following appeared in an article of Tao \cite{Tao1999BochnerRieszConjectureImpliesa} from 1999 and an article of Christ \cite{Christ1988RegularityInversesSingular} from 1988. 
	
	Suppose that 
	$ \widehat f $ is supported inside a ball of radius 1. 
	%$ \supp{\widehat f} \subset B_1(\xi_0) $
	%	for some $ \xi_0\in \R^n. $
	If $ |x'-y'| \leq 4 $ and $ |t'-s'|\leq 4, $ then,
	\begin{equation}
		\left|e^{it'\Delta}f(x')\right|
		\leq
		\sum_{\mathfrak{l}\in\Z^n} 
		\frac{1}{(1+|\mathfrak{l}|)^{n+1}} \left|e^{is'\Delta}f_\mathfrak{l}(y')\right|,
	\end{equation}
	where $ \widehat{f_\mathfrak{l}}(\xi) = e^{2\pi i\mathfrak{l}\xi}\widehat f(\xi)$.
	
	Now, fix $ \alpha \geq 1/2 $ and $ \gamma\in \GammaR{R^{-1}}$. Define $ \theta $ as in \eqref{equation_notation theta to be rid of gamma}. Whenever $ |x-y|\leq 2 $ and $ |t-s|\leq 2 $, we have that 
	$ \left| x+\theta(t) - ( y + \theta(s)) \right|\leq 4. $ Thus,
	\begin{equation}
		\left|e^{it\Delta}f(x+\theta(t))\right|
		\leq
		\sum_{\mathfrak{l}\in\Z^n} 
		\frac{1}{(1+|\mathfrak{l}|)^{n+1}} \left|e^{is\Delta}f_\mathfrak{l}(y+ \theta(s))\right|.
	\end{equation}
	Therefore, if $ |x-x_0| \leq 1 $ and $ |t-t_0|\leq 1 $, then,
	\begin{equation}\label{equation_stability lemma aka locally constant property substitute}
		\left|e^{it\Delta}f(x+\theta(t))\right| 
		\leq
		\sum_{\mathfrak{l}\in\Z^n} 
		\frac{1}{(1+|\mathfrak{l}|)^{n+1}} \int_{t_0}^{t_0+1}\int_{B_1(x_0)}\left|e^{is\Delta}f_\mathfrak{l}(y+ \theta(t))\right|dyds.
	\end{equation}
	%CAREFUL HERE: there are FORMATTING ISSUES and an ugly argument with a widetilde x0. It might be UGLY but it is CORRECT though.
	
	\begin{proof}
		[Proof of Theorem \ref{theorem_Theorem 1.3. from Du Zhang modified}.]
		For the sake of briefness, given $ (x,t)\in\R^{n+1} $ let us denote 
		\begin{equation}
			E'f(x,t) 
			:= E'_{\gamma,R} f(x,t) 
			:= e^{i t \Delta} f 
			\left(
			x + R\gamma\left(\frac{t}{R^2}\right)
			\right).
		\end{equation}
		Now, we can write 
		\begin{align}
			&\left\|
			\sup _{0<t \leq R}
			\left| 
			e^{i t \Delta} f 
			\left(
			x + R\gamma\left(\frac{t}{R^2}\right)
			\right)
			\right|
			\right\|_{L^2\left(B^n_R(0)\right)} ^2
			%		\\
			%		&
			=
			\left\|
			\sup _{0<t \leq R} |E'f(x,t)|
			\right\|_{L^2\left(B^n_R(0)\right)} ^2
			\\ 
			&
			=
			\int_{B^n_R(0)} \left(
			\sup_{\substack{0<t\leq R} }|E'f(x,t)|^2 
			\right)dx
			\\
			&
			=
			\sum_{\substack{x_0\in\Z^n \\ |x_0|<R} }
			\int_{B^n_1(x_0)} 
			\left(
			\sup _{0<t \leq R} |E'f(x,t)|^2
			\right)
			dx
			\\
			&
			%		\lesssim 
			\leq C_n % el tamaño de la bola unidad de Rn
			\sum_{\substack{x_0\in\Z^n \\ |x_0|<R} }
			\left[
			\sup_{|x-x_0| \leq 1}
			\left(
			\sup_{\substack{0 < t \leq R}} 
			|E'f(x,t)|^2
			\right)
			\right].
			%		\intertext{
				%			
				%%			the supremum is almost attained, so the above is
				%		}
		\end{align}
		For each $ x_0 $, there exists $\widetilde{t_0} = \widetilde{t_0}(x_0)\in \Z\cap[0,R] $ such that the supremum on each term of the above sum is almost attained inside $ B^{n+1}_1(x_0,\widetilde{t_0}) $. Therefore, 
		%	By \eqref{equation_stability lemma aka locally constant property substitute}, we obtain that
		\begin{align}
			%		&
			\left\|
			\sup_{0< t\leq R}|E'f(x,t)|\right\|_{L^2(B^n_R(0))}^2
			&
			\lesssim 
			\sum_{\substack{x_0\in\Z^n \\ |x_0|<R} } \sup_{(x,t)\in B_1^{n+1}(x_0,\widetilde{t_0})} \left|E'f(x,t)\right|^2,
			\intertext{which is, by \eqref{equation_stability lemma aka locally constant property substitute},}
			&
			\lesssim 
			\sum_{\substack{x_0\in\Z^n \\ |x_0|<R} } 
			\left|
			\sum_{\mathfrak{l}
				%				, \mathfrak{m} 
				\in \mathbb{Z}^n 
				%				\times \mathbb{Z}^n
			} 
			\frac
			{1 \verluegoalpha{\cdot R^{N(1-2\alpha)}}}
			{(1+|\mathfrak{l}|)^{N}
				%					(1+|\mathfrak{m}|)^{N}
			} 
			\int_{B^n_ 1\left(x_0\right)}
			\int_{\widetilde{t_0}}^{\widetilde{t_0}+1}
			\left|E'f_{\mathfrak{l}
				%				, \mathfrak{m}
			}(y,s)	
			\right|
			d s d y
			\right|^2.
			%	\right)
			%	dx,
			\intertext{
				%		with constants independent of $ \gamma $. 
				Therefore, denoting $ C_\mathfrak{l} = \frac{1}{(1+|\mathfrak{l}|)^{N}}$ and using Cauchy-Schwarz,}
			&
			\lesssim 
			\sum_{\substack{x_0\in\Z^n \\ |x_0|<R} }  
			%	\left|
			\sum_{\mathfrak{l}
				%				, \mathfrak{m} 
				\in \mathbb{Z}^n 
				%				\times \mathbb{Z}^n
			} 
			C_\mathfrak{l}
			%			 C_\mathfrak{m}
			\left\|
			E'f_{\mathfrak{l}
				%					, 	\mathfrak{m}
			}(y,t)
			\right\|^2_{L^2(B^{n+1}_2(x_0,\widetilde{t_0}))}	.
		\end{align}
		%	Let us choose $ X = \bigcup_{\substack{x_0\in\Z^n \\ |x_0|<R} } B^{n+1}_2\left(x_0,\widetilde{t_0}\right) $. By Fubini, 
		Let us choose $ X = \bigcup_{\substack{x_0\in\Z^n \\ |x_0|<R} } B^{n+1}_2\left(x_0,\widetilde{t_0}\right) $. The above lets us deduce
		%	,by Fubini's theorem, 
		that
		\begin{align}
			\left\|
			\sup_{0< t\leq R}|E'f(x,t)|
			\right\|_{L^2(B^n_R(0),dx)}^2
			%		\\
			&\lesssim 
			%		C_{\epsilon,\alpha} 
			\sum_{\mathfrak{l}
				%			, \mathfrak{m} 
				\in \mathbb{Z}^n 
				%			\times \mathbb{Z}^n
			} 
			C_{\mathfrak{l}}
			%		C_{\mathfrak{m}}
			\left\|E'f_{\mathfrak{l}
				%				, \mathfrak{m}
			}(y,t)
			\right\|_{L^2(X)}^2.
			\intertext{By Theorem \ref{theorem_Cor1.7 of Du Zhang modified}, this is,
			}
			&\lesssim 
			C_{\epsilon} 
			\verluegoalpha{R^{\frac{2\beta(1-2\alpha)}{n+1}}}
			\verluegoalpha{R^{\largetwoN(1-2\alpha)}}  
			\sum_{\mathfrak{l}
				%			, \mathfrak{m} 
				\in \mathbb{Z}^n 
				%			\times \mathbb{Z}^n
			} 
			C_{\mathfrak{l}}
			%		C_{\mathfrak{m}}
			(\phi_{X,n})^{\frac{2}{n+1}} \|f_{\mathfrak{l}
				%				,\mathfrak{m}
			}
			\|_{L^2(\R^n)}^2 R^{\frac{2n}{2(n+1)}+\epsilon}.
		\end{align}
		Recall that, given $ x_0\in\Z^n\cap B_R^n(0), $ we have chosen exactly one $ \widetilde{t_0}\in \Z\cap[0,R] $. Consequently, $ \phi_{X,n} \leq 1 $. Therefore, the above inequalities yield
		\begin{equation}
			\left\|
			\sup_{0< t\leq R}|E'f(x,t)|
			\right\|_{L^2(B^n_R(0),dx)}^2
			\lesssim C_{\epsilon} 
			R^{\frac{2n}{2(n+1)}+\epsilon}\|f\|_{2}^2.
		\end{equation}
	\end{proof}
%%%%%%%%%%%%%% BIBLIOGRAPHY %%%%%%%%%%%%%
\nocite{*}
\medskip

\printbibliography
% Comment this after use.!!!
%\input{Th1.3.M-main.bbl} % Comment this after use.!!!
% Comment this after use.!!!
 
\footnotesize
\sc JAVIER MINGUILLÓN, DEPARTMENT OF MATHEMATICS, UNIVERSIDAD AUTÓNOMA DE MADRID, 28049 MADRID, SPAIN

\normalfont
\textit{E-mail address:} javier.minguillon@uam.es

\end{document}